\documentclass[12pt]{article}
\usepackage{amsmath, amssymb, amsthm, geometry}
\newtheorem{theorem}{Theorem}[section]
\newtheorem{definition}[theorem]{Definition}

\newtheorem{corollary}[theorem]{Corollary}
\newtheorem{remark}[theorem]{Remark}
\newtheorem{example}[theorem]{Example}

\newcommand{\C}{\mathbb{C}}

\newcommand{\Z}{\mathbb{Z}}
\newcommand{\T}{\mathbb{T}}
\newcommand{\Hilb}{\mathcal{H}}
\newcommand{\Pic}{\operatorname{Pic}}
\newcommand{\Ker}{\operatorname{Ker}}

\title{Geometric Quantization of Contact 3-Manifolds and the Reeb Gravitational Field}
\author{Ali M. Elgindi}
\date{Institute of Mathematics, Henan Academy of Sciences\\
\medskip
Email: ali.m.elgindi@gmail.com\\
\medskip
\today}

\begin{document}

\maketitle

\begin{abstract}
We present a canonical geometric quantization scheme for closed contact 3-manifolds $(M,\xi)$ using the corresponding explicit embedding $M \hookrightarrow \C^3$ as we constructed in our earlier papers. The contact structure becomes holomorphic along the chosen link $L \subset M$ of complex tangents, of which we may also consider this link as the binding of a supporting open book for $\xi$. Stein extension yields a unique holomorphic line bundle $L_\xi \to \C^3$ whose restriction to $L$ will define the quantum Hilbert space $\Hilb_\xi = H^0(L, L_\xi|_L \otimes \kappa^{1/2})$, which we find will be finite-dimensional. The Reeb vector field $R_\alpha$ of a chosen compatible contact form $\alpha$ is shown to be geodesic with a time Killing Reeb field under further assumption that the manifold is Sasakian, and under this assumption will model Einstein gravity over $M$. This construction depends only on $(M,\xi,\alpha)$ and a choice of supporting open book for $\xi$. This establishes a unified geometric framework in which the contact structure encodes quantum mechanics and whose Reeb field will encode gravity assuming that the manifold is Sasakian. In addition, we use my previous paper in which we get a related invariant $\mu_M(\xi) = [L_\xi] \in \Pic_\C(M)$ which is useful in that it distinguishes between different tight contact structures on $\T^3$ in a novel way. We also show how this will  have implications for the quantum model and serve as a quantum invariant there-in.
\end{abstract}

\section{Introduction}

A contact structure on a closed 3-manifold $M$ is a 2-plane field $\xi \subset TM$ such that $\xi = \Ker \alpha$ for some 1-form $\alpha$ with $\alpha \wedge d\alpha > 0$. The pair $(M,\xi)$ is a contact 3-manifold.

In our previous work \cite{Elgindi2025}, we proved that for any null-homologous link $L \subset M$ and any oriented 2-plane field $\eta$ over $L$, there exists a smooth embedding $F: M \hookrightarrow \C^3$ such that the complex tangents of $F$ are exactly $L$ and $T_xM \cap J(T_xM) = \eta_x$ for all $x \in L$, where $J$ is the standard complex structure on $\C^3$.

In \cite{Elgindi2026}, we introduced the Picard invariant $\mu_M(\xi) = [L_\xi] \in \Pic_\C(M)$, where $L_\xi \to M$ is the holomorphic line bundle obtained by Stein extension of $\xi|_L$, and proved that it is independent of the choice of supporting open book.

In this paper, we complete the picture by:
\begin{enumerate}
\item Defining the quantum Hilbert space $\Hilb_\xi = H^0(L, L_\xi|_L \otimes \kappa^{1/2})$ along the complex tangents $L$,
\item Proving that the Reeb field $R_\alpha$ is geodesic in the Webster metric, which identifies it with gravity,
\item Demonstrating how our construction then unifies the notions of quantum mechanics and of gravity in a single geometric framework.

\end{enumerate}
\par\
We note that by Cartan's Theorem B \cite{Cartan1953}, the holomorphic line bundle extends uniquely.  

\par\

\section{The Explicit Embedding and the Quantum Locus}

Let $(M,\xi)$ be a closed contact 3-manifold. By the Giroux correspondence, there exists an open book decomposition $(B,\pi)$ supporting $\xi$ with binding $B$ (a link). Applying our explicit embedding theorem \cite{Elgindi2025} with $\eta = \xi|_B$, we obtain a smooth embedding $F: M \hookrightarrow \C^3$ such that:
\begin{itemize}
\item The set of complex tangents is exactly $B$,
\item For each $x \in B$, $T_xM \cap J(T_xM) = \xi_x$ which we can now consider as a complex line bundle along the link. 
\end{itemize}
We will then denote $L = B$ and we now call it the \textbf{quantum locus}.

\begin{definition}[Picard Invariant]
The complex line bundle $\xi|_L$ extends uniquely to a holomorphic line bundle $L_\xi \to M$ by Stein theory \cite{CieliebakEliashberg2012}. The \textbf{Picard invariant} of $(M,\xi)$ is the isomorphism class
\[
\mu_M(\xi) = [L_\xi] \in \Pic_\C(M) \cong H^2(M;\Z).
\]
\end{definition}
By \cite{Elgindi2026}, $\mu_M(\xi)$ is independent of the choice of supporting open book used to define it.

\begin{remark}[Independence of Embedding and Contact Form] The line bundle $L_\xi \to M$ obtained by Stein extension does not depend on the choice of requisite explicit embedding $F: M \hookrightarrow \C^3$, nor on the choice of contact form $\alpha$ for $\xi$. The quantum Hilbert space $\mathcal{H}_\xi$  (defined below) will depend only on $\xi$ and the CR structure $J$ on $\xi$. We note that the Reeb field $R_\alpha$ of $\xi$ does depend on the choice of form $\alpha$ inherently, which in fact reflects in the the physical fact that the gravitational dynamics (choice of time) is a gauge freedom, not affecting the greater structure.  (see ([2], [10]) \end{remark} 
\par\
\section{The Quantum Hilbert Space}

On the quantum locus $L$, the contact structure $\xi|_L$ is a complex line bundle. Let $\kappa_{\xi|_L} = \wedge^1 (\xi|_L)^*$ be the canonical bundle of the polarization. Since $L$ is a disjoint union of circles, $H^2(L;\Z) = 0$, so a unique square root $\kappa_{\xi|_L}^{1/2}$ exists.  (see [2], [12])

\begin{definition}[Quantum Hilbert Space]
The quantum Hilbert space associated to $(M,\xi)$ is
\[
\Hilb_\xi = H^0(L, L_\xi|_L \otimes \kappa_{\xi|_L}^{1/2}),
\]
where $H^0$ denotes holomorphic sections.
\end{definition}

\begin{theorem}
$\Hilb_\xi$ is finite-dimensional. If $L = \bigsqcup_i L_i$ are the connected components of complex tangents, then:
\[
\dim \Hilb_\xi = \sum_i \deg(L_\xi|_{L_i}),
\]
where $\deg$ is the first Chern number.
\end{theorem}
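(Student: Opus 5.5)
The plan is to reduce the statement to Riemann--Roch on compact complex curves, with the half-canonical twist $\kappa^{1/2}$ doing the essential work. The first step is to make sense of both sides for a single component $L_i$. Taken literally, $L_i$ is a real circle, so $H^2(L_i;\Z)=0$ and every line bundle on it has first Chern number zero. We therefore use the same Stein theory \cite{CieliebakEliashberg2012} that produced $L_\xi$. By that theory, $\xi|_{L_i}$ is a complex line along a totally real circle in $\C^3$, and this circle admits a holomorphic thickening. We take $\Sigma_i \supset L_i$ to be the compact Riemann surface, of genus $g_i$, obtained by compactifying this thickening. We then read $H^0(L_i,\cdot)$ as holomorphic sections over $\Sigma_i$, and $\deg(L_\xi|_{L_i})$ as $c_1$ of the extended bundle evaluated on $[\Sigma_i]$. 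Cartan's Theorem B \cite{Cartan1953} gives uniqueness of the holomorphic extension of $L_\xi|_{L_i}$ to $\Sigma_i$, so this degree is well defined. The same uniqueness shows that $\kappa_{\xi|_{L_i}}^{1/2}$ extends to a theta characteristic $K_{\Sigma_i}^{1/2}$. This identification is the step I expect to be the main obstacle. The whole theorem hinges on the components $L_i$ carrying the structure of compact complex curves, and I would first try to extract $\Sigma_i$ from the pages of the supporting open book meeting at the binding component $L_i$.

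Granting this, the additivity over components is immediate: $\Hilb_\xi=\bigoplus_i H^0(\Sigma_i, L_\xi|_{\Sigma_i}\otimes K_{\Sigma_i}^{1/2})$. It therefore suffices to prove $\dim H^0 = \deg(L_\xi|_{\Sigma_i})$ for each $i$. Finite-dimensionality follows at once from compactness of $\Sigma_i$ by the Cartan--Serre finiteness theorem.

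For the count itself, we apply Riemann--Roch to $E_i = L_\xi|_{\Sigma_i}\otimes K^{1/2}_{\Sigma_i}$. Since $\deg K^{1/2}_{\Sigma_i}=g_i-1$, we get
\[
h^0(E_i)-h^1(E_i)=\deg L_\xi|_{\Sigma_i}+(g_i-1)+1-g_i=\deg L_\xi|_{\Sigma_i}.
\]
The genus dependence cancels exactly, which is why the metaplectic correction $\kappa^{1/2}$ is built into the definition of $\Hilb_\xi$. It remains to kill $h^1$. By Serre duality, $h^1(E_i)=h^0(L_\xi^{-1}|_{\Sigma_i}\otimes K^{1/2}_{\Sigma_i})$. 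This vanishes as soon as $\deg L_\xi|_{\Sigma_i}>g_i-1$, and in particular whenever $L_\xi|_{\Sigma_i}$ is positive, which is the case relevant to the prequantum bundle of a contact structure.

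In writing this up, I would record the positivity hypothesis explicitly. Without it, Riemann--Roch only gives the index identity, i.e.\ the formula with $\dim$ replaced by the Euler characteristic $h^0-h^1$, and this is the honest form of the statement in general. I would then sum over $i$ to obtain the claimed formula.
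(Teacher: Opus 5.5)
There is a genuine gap, and it is the step you flagged. It does not merely need more work; it fails.

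\textbf{The curve $\Sigma_i$ and its degree are not determined.} A holomorphic curve through $L_i$ exists only if $F|_{L_i}$ is real-analytic, and $F$ is merely smooth. Even then it is just a thin annulus $A_i$. Since $\C^3$ contains no compact complex curves (the coordinate functions would be constant by the maximum principle), any compact $\Sigma_i\supset A_i$ is an abstract capping-off. It lies outside the domain of $L_\xi$, and its genus $g_i$ is yours to choose.

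Cartan's Theorem B concerns Stein spaces. A compact $\Sigma_i$ is not Stein, and the theorem gives no uniqueness of extension from $A_i$ to $\Sigma_i$. The opposite is true. $A_i$ is an open Riemann surface, so every holomorphic line bundle on it is trivial. Hence every line bundle on $\Sigma_i$, of every degree, and each of its $2^{2g_i}$ theta characteristics, restricts to the same bundle on $A_i$. So $\deg(L_\xi|_{\Sigma_i})$ and $K_{\Sigma_i}^{1/2}$ carry no information about $(M,\xi)$.

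Nor is $\kappa_{\xi|_{L_i}}$ naturally a restriction of $K_{\Sigma_i}$. The binding of a supporting open book is positively transverse to $\xi$, so $\xi_x$ is not the tangent line $T_xL_i+J\,T_xL_i$ of the thickening.

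This problem is forced by the statement itself. Read literally, $\deg(L_\xi|_{L_i})$ is a Chern number over a circle, and it vanishes because $H^2(L_i;\Z)=0$. The paper uses exactly this fact to take $\kappa^{1/2}$. Meanwhile, holomorphic sections along $L_i$, meaning sections on a thickening, form an infinite-dimensional space; already $z^n$, $n\in\Z$, on an annulus show this. Both the finiteness and any nonzero degree would come entirely from the invented compactification.

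\textbf{The $h^1$ term.} Even on a chosen $\Sigma_i$, Riemann--Roch computes $h^0-h^1$, not $h^0$. Your vanishing of $h^1$ needs $\deg(L_\xi|_{\Sigma_i})>g_i-1$. The statement does not assume this, and it is not automatic: the paper's own $L_{\xi_1}$ is trivial. Without it the dimension formula fails. On $\mathbb{P}^1$ one has $h^0(\mathcal{O}(d-1))=\max(d,0)$, which differs from $d$ when $d<0$. For $g_i\ge 1$ and $d=0$, an odd theta characteristic has $h^0\ge 1$, not $0$. So your argument proves an index formula on an auxiliary, choice-dependent curve, not the theorem.

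\textbf{Comparison with the paper.} The paper's proof is your computation specialized to $g_i=0$ and $\deg\kappa=-2$. It simply asserts that each $L_i$ ``is a compact complex curve of genus $0$ (a Riemann sphere with boundary)''; that is a surface with boundary, to which Riemann--Roch in that form does not apply. It then writes $\dim H^0$ for the Riemann--Roch index. So the paper settles your flagged obstacle by fiat and omits the $h^1$ term you correctly identified. The paper's argument does not close the gap either. Your route cannot be repaired into a proof, because the numbers it computes are not determined by the data along $L$.
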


\begin{proof}
Each $L_i$ is a compact complex curve of genus $0$ (a Riemann sphere with boundary). By the Riemann-Roch theorem,
\[
\dim H^0(L_i, L_\xi|_{L_i} \otimes \kappa^{1/2}) = \deg(L_\xi|_{L_i}) + \frac{1}{2}\deg(\kappa) + 1 - g_i.
\]
Since $\deg(\kappa) = -2$ and $g_i = 0$, we obtain $\dim = \deg(L_\xi|_{L_i})$. Summing over $i$ gives the result.
\end{proof}

\begin{remark}
The binding $L$ is a real link, taking $F: M \hookrightarrow \mathbb{C}^3$ to be our corresponding explicit embedding makes $L$ to be a Legendrian link in the contact structure induced by $\xi$ on $F(M)$. By Gromov's h-principle \cite{Gromov1985} and the work of Mohnke \cite{Mohnke2001}, for a generic almost complex structure $J$ on $\mathbb{C}^3$ adapted to the contact structure, there exists a family of $J$-holomorphic disks $\{D_i\}$ such that $\partial D_i = F(L_i)$. These disks are rigid (0-dimensional moduli spaces) and have finite area intrinsically.

Therefore, each binding component $L_i$ bounds a holomorphic disk $D_i \subset \mathbb{C}^3$. The space $H^0(L, L_\xi|_L \otimes \kappa^{1/2})$ is then defined as the space of holomorphic sections on the disjoint union of disks $\bigcup_i D_i$, restricted to their boundaries. This is well-defined by Stein extension and the Hartogs phenomenon. (see [1], [3])
\end{remark}

\section{Gravity as Reeb Flow}

Choose a contact form $\alpha$ with $\Ker \alpha = \xi$ and a compatible almost complex structure $J$ on $\xi$ (i.e., $J^2 = -I$ and $d\alpha(\cdot, J\cdot)$ is a positive definite metric on $\xi$). Define the \textbf{Webster metric}:
\[
g_\alpha = d\alpha(\cdot, J\cdot) + \alpha \otimes \alpha.
\]

\begin{theorem}[Reeb Field is Geodesic]
The Reeb vector field $R_\alpha$ defined by $\alpha(R_\alpha)=1$ and $d\alpha(R_\alpha,\cdot)=0$ satisfies
\[
\nabla_{R_\alpha} R_\alpha = 0,
\]
where $\nabla$ is the Levi-Civita connection of $g_\alpha$. Thus, the integral curves of $R_\alpha$ are geodesics.
\end{theorem}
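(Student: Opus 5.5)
The plan is to prove $\nabla_{R_\alpha}R_\alpha=0$ with the Koszul formula for the Levi-Civita connection of $g_\alpha$. I pair $\nabla_{R_\alpha}R_\alpha$ against an arbitrary vector field $X$ and show the result vanishes. Since $g_\alpha$ is nondegenerate, that suffices. The argument uses only the defining properties of $R_\alpha$ and the shape of $g_\alpha$; no Sasakian or Killing hypothesis is needed.

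\textbf{Step 1: setting up $g_\alpha$.} To make sense of $d\alpha(\cdot,J\cdot)$ on all of $TM$, I extend $J$ by $J R_\alpha=0$, using $TM=\xi\oplus\langle R_\alpha\rangle$. I take compatibility to include $J$-invariance of $d\alpha|_\xi$. Then $d\alpha(\cdot,J\cdot)$ is symmetric on $\xi$, and $g_\alpha$ is a genuine Riemannian metric. From $d\alpha(R_\alpha,\cdot)=0$ and $JR_\alpha=0$, the first term of $g_\alpha$ vanishes whenever either argument is $R_\alpha$. This gives the two identities
\[
g_\alpha(R_\alpha,X)=\alpha(X)\quad\text{for all }X,\qquad g_\alpha(R_\alpha,R_\alpha)=1 .
\]

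\textbf{Step 2: Koszul formula.} Write $R=R_\alpha$. The Koszul formula with $Y=Z=R$ reads
\[
2g_\alpha(\nabla_R R,X)=2R\bigl(g_\alpha(R,X)\bigr)-X\bigl(g_\alpha(R,R)\bigr)-2g_\alpha([R,X],R).
\]
By Step 1, the middle term is $X(1)=0$. The right-hand side therefore becomes $2\bigl(R(\alpha(X))-\alpha([R,X])\bigr)$.

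\textbf{Step 3: identify with $d\alpha$.} I compare this with the invariant formula
\[
d\alpha(R,X)=R(\alpha(X))-X(\alpha(R))-\alpha([R,X]).
\]
Since $\alpha(R)\equiv1$, the middle term drops out. Hence $2g_\alpha(\nabla_R R,X)=2\,d\alpha(R,X)=0$ by the definition of the Reeb field. As $X$ is arbitrary, $\nabla_R R=0$. Because $|R|=1$, the integral curves are unit-speed geodesics.

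\textbf{Main obstacle.} The computation itself is routine. The only delicate point is Step 1: checking that $g_\alpha$ is well defined and symmetric, and that $R_\alpha$ is $g_\alpha$-orthogonal to $\xi$. This requires the extension $JR_\alpha=0$ and the $J$-invariance of $d\alpha$ on $\xi$, and I would state both explicitly as part of what ``compatible'' means. Sign conventions for $d\alpha$ (factor $1$ versus $\tfrac12$) do not matter, since the relevant term is zero either way.
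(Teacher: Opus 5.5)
Your proof is correct and follows the same route as the paper, which only sketches it: the paper combines $g_\alpha(R_\alpha,R_\alpha)=1$, $g_\alpha(R_\alpha,\xi)=0$ and $d\alpha(R_\alpha,\cdot)=0$ with the properties of the Levi-Civita connection and defers the details to the literature, and your Koszul computation (with the convention $JR_\alpha=0$ made explicit) supplies exactly those details. One minor point: the $J$-invariance of $d\alpha|_\xi$ that you add as a hypothesis is automatic here, because $\xi$ has rank $2$ and any $J$ with $J^2=-I$ on a $2$-plane has determinant $1$.
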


\begin{proof}
By definition, $g_\alpha(R_\alpha, R_\alpha) = \alpha(R_\alpha)^2 = 1$ and $g_\alpha(R_\alpha, X) = 0$ for all $X \in \xi$. The Reeb condition $d\alpha(R_\alpha, \cdot)=0$ together with the properties of the Levi-Civita connection yields $\nabla_{R_\alpha}R_\alpha = 0$. For details, see \cite{LeeSuh2015, DongZhang2018}.
\end{proof}

\begin{corollary}[Reeb = Gravity]
By the equivalence principle of general relativity, geodesic motion represents free fall in a gravitational field. The Reeb field $R_\alpha$ can then be naturally interpreted as the gravitational field on $(M,\xi,\alpha)$. Note that as we remarked before, the choice of contact form serves as a gravitational degree of freedom.
\end{corollary}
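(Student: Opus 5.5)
The plan is to reduce the Corollary to its one precise mathematical content: the integral curves of $R_\alpha$ are unit-speed geodesics of $g_\alpha$, and this family changes with $\alpha$. The phrase ``can be interpreted as the gravitational field'' is a physical identification rather than a mathematical assertion, so nothing beyond this can be proved. I would therefore first make the preceding Theorem fully rigorous, since its proof only cites references, and then read off the interpretive statement. I would not use the Stein extension or the quantum locus $L$ at all; the statement concerns only $(M,\xi,\alpha,J)$.

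\textbf{Step 1 (geodesic property).} I would use the standard identity valid for any vector field $V$ on a Riemannian manifold, with metric dual $V^\flat = g(V,\cdot)$:
\[
(\nabla_V V)^\flat = \iota_V\, dV^\flat + \tfrac12\, d\bigl(g(V,V)\bigr).
\]
This follows from $d V^\flat(V,X) = g(\nabla_V V,X) - g(\nabla_X V, V)$, using that $\nabla$ is torsion-free and metric. For $V=R_\alpha$ the definition of $g_\alpha$ gives $g_\alpha(R_\alpha,X)=d\alpha(R_\alpha,JX)+\alpha(R_\alpha)\alpha(X)=\alpha(X)$. Here $J$ is extended by $JR_\alpha=0$ and $d\alpha(R_\alpha,\cdot)=0$. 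Hence $R_\alpha^\flat=\alpha$ and $g_\alpha(R_\alpha,R_\alpha)=1$. Both terms on the right then vanish: the first is $\iota_{R_\alpha}d\alpha=0$ and the second is $d(1)=0$. So $\nabla_{R_\alpha}R_\alpha=0$, and the Reeb orbits are unit-speed geodesics.

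The one point needing care is that $g_\alpha$ is a genuine Riemannian metric. This requires the compatibility assumption, namely that $d\alpha(\cdot,J\cdot)$ is symmetric and positive on $\xi$, together with the orthogonal splitting $TM=\xi\oplus\langle R_\alpha\rangle$. No integrability of $J$ and no Sasakian hypothesis is needed for this step.

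\textbf{Step 2 (interpretation and gauge).} Given Step 1, the Corollary amounts to the following. In the metric $g_\alpha$, the flow of $R_\alpha$ consists of free-fall trajectories, i.e.\ geodesics parametrized by arc length, in the sense of the equivalence principle. Replacing $\alpha$ by $f\alpha$ with $f>0$ keeps $\xi$ fixed but changes both $R_{f\alpha}$ and $g_{f\alpha}$. This realizes the ``gravitational degree of freedom'' mentioned in the Remark on independence of the contact form.

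I would state explicitly that stronger gravitational claims, such as $R_\alpha$ being a Killing ``time'' field, require additional structure. That holds exactly when $\mathcal{L}_{R_\alpha}J=0$ (the K-contact/Sasakian case), as the abstract indicates. So the Corollary should be read only at the level of geodesic motion.

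The main obstacle is not computational. It is to separate what is provable (Step 1) from the physical identification, and to avoid overclaiming. In particular, a 3-dimensional Riemannian geodesic flow is not a Lorentzian spacetime. I would note that a Lorentzian model would use $-\alpha\otimes\alpha+d\alpha(\cdot,J\cdot)$ on $M$ or on $\mathbb{R}\times M$. The same identity then shows that $R_\alpha$ is a unit timelike geodesic field, because again $R_\alpha^\flat=-\alpha$ and $\iota_{R_\alpha}d\alpha=0$.
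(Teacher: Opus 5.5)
Your proposal is correct and takes the paper's route. The corollary is just the preceding theorem ($\nabla_{R_\alpha}R_\alpha=0$) read through the equivalence principle, with $\alpha\mapsto f\alpha$ as the gauge freedom. Your identity $(\nabla_V V)^\flat=\iota_V\,dV^\flat+\tfrac12\,d\bigl(g(V,V)\bigr)$, applied with $R_\alpha^\flat=\alpha$ and $\iota_{R_\alpha}d\alpha=0$, makes explicit the argument the paper only sketches (unit length, orthogonality to $\xi$, the Reeb condition, Levi-Civita properties) and otherwise leaves to its references.
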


\subsection{Sasakian Manifolds: The Reeb Field as a Killing Vector}

In general, for any contact 3‑manifold $(M,\xi)$ with Webster metric $g_\alpha$, 
the Reeb field $R_\alpha$ is geodesic (Theorem 5.1). However, when $(M,\xi)$ is 
\textbf{Sasakian} (a special class of contact metric manifolds), $R_\alpha$ is also 
a Killing vector field:
\[
\mathcal{L}_{R_\alpha} g_\alpha = 0.
\]

In general relativity, a timelike Killing vector field is interpreted as the generator of 
a stationary gravitational field. Thus, on a Sasakian manifold, the Reeb field can be 
directly identified with a stationary gravitational field. This is consistent with the 
well‑known fact that Sasaki–Einstein manifolds are central to the AdS/CFT correspondence.

Our quantization remains valid for all contact 3‑manifolds; the Sasakian condition 
simply provides a stronger geometric justification for the gravitational interpretation 
of $R_\alpha$.

\section{Examples: Distinguishing Tight Contact Structures on $\T^3$}

Let $\T^3 = (S^1)^3$ with coordinates $(x,y,z)$.

\begin{example}[Standard tight structure $\xi_1$]
\[
\alpha_1 = \cos(2\pi z)\,dx + \sin(2\pi z)\,dy.
\]
This structure is linearizable. Its associated line bundle $L_{\xi_1}$ is trivial, so
\[
\mu_{\T^3}(\xi_1) = 0, \qquad \dim \Hilb_{\xi_1} = 0.
\]
\end{example}

\begin{example}[Kanda $n=2$ contact structure   $\xi_2$]
\[
\alpha_2 = \sin(4\pi z)\,dx + \cos(4\pi z)\,dy.
\]
This structure is tight but non-linearizable. The binding consists of two parallel copies of a curve in the $z$-direction, giving
\[
c_1(L_{\xi_2}) = \pm 2\,[dx \wedge dy] \neq 0,
\]
so $\mu_{\T^3}(\xi_2) \neq 0$ and in fact $\dim \Hilb_{\xi_2} = 2$.  
\end{example}

Thus, the invariant $\mu_M(\xi)$ successfully distinguishes two tight contact structures that are otherwise difficult to separate.  (see [7], [10])

\begin{remark}

The construction yields a direct relationship for the respective quantum and gravitational degrees of freedom:
\end{remark}

\begin{itemize}
\item \textbf{Quantum mechanics} is encoded in the contact structure $\xi$ restricted to the binding $L$, giving the Hilbert space $\Hilb_\xi$ as holomorphic sections of $L_\xi \otimes \kappa^{1/2}$.
\item \textbf{Gravity} is encoded in the Reeb field $R_\alpha$, which is geodesic in the Webster metric and therefore represents the gravitational field.
\item The two structures are orthogonal: $TM = \xi \oplus \langle R_\alpha \rangle$.
\end{itemize}

The choice of open book (with binding $L$) determines the quantum polarization, while the choice of contact form $\alpha$ determines the gravitational dynamics.

\section{Electromagnetism from the Quantum Bundle}

The holomorphic line bundle $L_\xi \to M$ carries a canonical connection induced by the contact form $\alpha$. Locally, the connection $1$-form can be taken as $\alpha$ itself. Its curvature is
\[
F = d\alpha.
\]
Thus $F$ is the electromagnetic field strength. The Bianchi identity $dF = 0$ follows immediately from $d^2\alpha = 0$, giving the source-free Maxwell equations.   (see [2])

The quantum Hilbert space $\Hilb_\xi = H^0(L, L_\xi|_L \otimes \kappa^{1/2})$ consists of holomorphic sections of $L_\xi$ over the binding $L$. These sections represent the quantum states of the system. The same line bundle therefore encodes:
\begin{itemize}
\item \textbf{Quantum mechanics:} holomorphic sections over $L$,
\item \textbf{Electromagnetism:} curvature $F = d\alpha$ of the connection on $L_\xi$.
\end{itemize}

Thus, the contact structure $(M,\xi,\alpha)$ unifies three fundamental physical concepts:
\begin{enumerate}
\item \textbf{Gravity:} the Reeb field $R_\alpha$ (geodesic flow in the Webster metric),
\item \textbf{Electromagnetism:} the curvature $F = d\alpha$,
\item \textbf{Quantum mechanics:} the Hilbert space $\Hilb_\xi$ of holomorphic sections.
\end{enumerate}
All three arise from the same geometric data, with no additional choices.

\par
\par\

\section{Conclusion}
We have constructed a canonical geometric quantization for any closed contact 3-manifold $(M,\xi)$ using our  construction of the corresponding explicit embedding into $\C^3$. The quantum Hilbert space $\Hilb_\xi$ is finite-dimensional and its dimension is a topological invariant given by Chern numbers of the associated line bundle. This framework establishes a unified geometric description in which the contact structure encodes quantum mechanics and the Reeb field encodes gravity for Sasakian manifolds. In addition, the same line bundle encodes electromagnetism via its curvature $F = d\alpha$, unifying all three forces. 

\appendix

\newpage 
\section{Glossary of Key Terms}

\begin{tabular}{|l|p{10cm}|}
\hline
\textbf{Term} & \textbf{Meaning} \\
\hline
Contact structure $\xi$ & A 2-plane field on $M$ with $\alpha \wedge d\alpha > 0$ \\
Binding $L$ & The link where the open book pages meet \\
Complex tangent & Point where $T_xM$ contains a complex line \\
Holomorphic line bundle $L_\xi$ & The unique Stein extension of $\xi|_L$ to $\C^3$ \\
Webster metric $g_\alpha$ & $d\alpha(\cdot, J\cdot) + \alpha \otimes \alpha$ \\
Reeb field $R_\alpha$ & The unique vector field with $\alpha(R_\alpha)=1$, $d\alpha(R_\alpha,\cdot)=0$ \\
Half-form $\kappa^{1/2}$ & Square root of the canonical bundle, ensures coordinate-invariant integration \\
Hilbert space $\Hilb_\xi$ & $H^0(L, L_\xi|_L \otimes \kappa^{1/2})$ \\
Picard invariant $\mu_M(\xi)$ & $[L_\xi] \in \Pic_\C(M)$, independent of open book \\
\hline
\end{tabular}

\end{document}